\newtheorem{theo}{Theorem}[section]
\newtheorem{lemm}[theo]{Lemma}
\newtheorem{rema}[theo]{Remark}
\numberwithin{equation}{section}
\begin{document}

\title[stability on inverse source problem]{Stability on the
one-dimensional inverse source scattering problem in a two-layered medium}

\author{Yue Zhao}
\address{School of Mathematics and Statistics, Jiangsu Normal University,
Xuzhou, Jiangsu 221116, China.}
\email{zhaoy@jsnu.edu.cn}

\author{Peijun Li}
\address{Department of Mathematics, Purdue University, West Lafayette, Indiana
47907, USA.}
\email{lipeijun@math.purdue.edu}

\thanks{The research of PL was supported in part by the NSF grant DMS-1151308.}

\subjclass[2010]{78A46, 35R30}

\keywords{inverse source problem, the Helmholtz equation, two-layered medium,
stability}

\begin{abstract}
This paper concerns the stability on the inverse source scattering problem for
the one-dimensional Helmholtz equation in a two-layered medium. We show that the
increasing stability can be achieved by using multi-frequency wave field at
the two end points of the interval which contains the compact support of the
source function. 
\end{abstract}

\maketitle

\section{Introduction and problem formulation}

We consider the one-dimensional Helmholtz equation in a two-layered medium:
\begin{align}\label{ode}
u^{\prime \prime}(x, \omega) + \kappa^2(x)u(x,\omega) = f(x),\quad x\in(-1, 1),
\end{align}
where $\omega>0$ is the angular frequency, the source function $f$ has a compact
support which is assumed to be contained in the interval $(-1,1)$, and the wave
number $\kappa$ satisfies
\[
\kappa(x) = \begin{cases}
\kappa_1, \quad x>0,\\
\kappa_2, \quad x<0.
\end{cases}
\]
Here $\kappa_j = c_j\omega, j=1, 2,$ where $c_j>0$ are constants. The
wave field $u$ is required to satisfy the outgoing wave conditions:
\begin{equation}\label{orc}
u^{\prime}(-1,\omega) + {\rm i}\kappa_2 u(-1,\omega) = 0, \quad
u^{\prime}(1,\omega) - {\rm i}\kappa_1 u(1,\omega) = 0.
\end{equation}

Given $f\in L^2(-1,1)$, it is known that the problem \eqref{ode}--\eqref{orc}
has a unique solution:
\begin{align}\label{sol}
u(x, \omega) = \int_0^1 g(x,y)f(y){\rm d}y,
\end{align}
where $g$ is the Green function given as follows
\[
 g(x, y)=\begin{cases}
           {\rm i} \frac{\kappa_1 - \kappa_2}{2\kappa_1(\kappa_1 +
\kappa_2)}e^{{\rm i}\kappa_1 (x+y)} + \frac{\rm i}{2\kappa_1}e^{{\rm i}\kappa_1
|x-y|}, & x>0,\\
 \frac{\rm i}{\kappa_1 + \kappa_2}e^{{\rm i}(\kappa_1 y-\kappa_2 x)}, & x<0,
         \end{cases}\quad\text{for} ~ y>0,
\]
and
\[
 g(x,y)=\begin{cases}
  {\rm i} \frac{\kappa_2 - \kappa_1}{2\kappa_2(\kappa_1 + \kappa_2)}e^{-{\rm
i}\kappa_2 (x+y)} + \frac{\rm i}{2\kappa_2}e^{{\rm i}\kappa_2 |x-y|}, &
x<0,\\[10pt]
 \frac{\rm i}{\kappa_1 + \kappa_2}e^{{\rm i}(- \kappa_2 y + \kappa_1 x )}, &
x>0,
 \end{cases}\quad\text{for} ~ y<0.
\]

This paper concerns the inverse source problem: Let $f$ be a
complex function with a compact support contained in $(-1, 1)$. The inverse
problem is to determine $f$ by using the boundary data $u(-1, \omega)$ and
$u(1, \omega)$ with $\omega\in (0, K)$ where $K >1$ is a positive constant.

The inverse source scattering problem have significant applications in antenna
synthesis, medical imaging, and optical tomography \cite{A-IP99, I-89}. They
been extensively investigated by many researchers \cite{ACTV-IP12, AM-IP06,
ABF-SIAP02, BN-IP11, DML-SIAP07, EV-IP09}. It is known that there is no
uniqueness for the inverse source problems at a fixed frequency due to the
existence of non-radiating sources \cite{DS-IEEE82, HKP-IP05}. Recently, it has
been realized that the use of multi-frequency data can not only overcome the
difficulties of non-uniqueness, which are presented at a single frequency, but
also achieve increasing stability \cite{BLLT-IP15, BLT-JDE10, CIL-JDE16,
LY-IPI17, LY-JMAA17}. These work assume that the medium is homogeneous in
the whole space. In this work, we intend to establish the increasing stability
on the inverse source problem for the one-dimensional Helmholtz equation in a
two-layered medium.

\section{Main result}

Define a functional space:
\[
 \mathcal{F}_M =\{f\in H^n(-1, 1): \|f\|_{H^n(-1, 1)}\leq M, ~{\rm
supp}f\subset(-1, 1)\},
\]
where $n\in \mathbb N$ and $M>1$ is a constant. Hereafter, the notation
"$a\lesssim b$" stands for $a\leq Cb$, where $C$ is a generic
constant independent of $n, \omega, K, M,$ but may change step by step in the
proofs.

The following stability estimate is the main result of this paper.

\begin{theo}\label{mr1}
Let $f\in\mathcal{F}_M$ and let $u$ be the solution \eqref{sol} corresponding to
$f$. Then we have
\begin{align}
\label{cfe} \| f\|^2_{L^2(-1, 1)}\lesssim
\epsilon^2+\frac{M^2}{\left(\frac{K^{\frac{2}{3}}|\ln
\epsilon|^{\frac{1}{4}}}{(6n-3)^3}\right)^{ 2n-1}},
\end{align}
where 
\begin{align}
\label{ep} \epsilon&=\left(\int_0^K \omega^2\left( |u(-1, \omega)|^2 +
|u(1, \omega)|^2 \right){\rm d}\omega\right)^{\frac{1}{2}}.
\end{align}
\end{theo}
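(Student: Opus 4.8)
The plan is to reduce the inverse problem to a stability estimate for the Fourier transform $\hat f$ of $f$, and then to split $\hat f$ into low and high frequencies: the high part is absorbed by the a priori bound $\|f\|_{H^n}\le M$, while the low part is controlled by the data together with a quantitative analytic continuation.

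\emph{Step 1: reduction to Fourier data.} Inserting the Green function into \eqref{sol} and evaluating at $x=\pm1$, and writing $f=f_++f_-$ with $f_\pm$ the restrictions of $f$ to $(0,1)$ and $(-1,0)$ and $\hat f_\pm$ their Fourier transforms, a direct computation gives, with $\gamma=\frac{c_1-c_2}{c_1+c_2}$, $a=\frac{2c_1}{c_1+c_2}$, $b=\frac{2c_2}{c_1+c_2}$ (so $\gamma^2+ab=1$),
\begin{align*}
P(\omega)&:=\frac{2c_1\omega}{{\rm i}\,e^{{\rm i}c_1\omega}}\,u(1,\omega)=\hat f_+(c_1\omega)+\gamma\hat f_+(-c_1\omega)+a\hat f_-(c_2\omega),\\
Q(\omega)&:=\frac{2c_2\omega}{{\rm i}\,e^{{\rm i}c_2\omega}}\,u(-1,\omega)=b\hat f_+(-c_1\omega)-\gamma\hat f_-(c_2\omega)+\hat f_-(-c_2\omega).
\end{align*}
In particular $\int_0^K(|P|^2+|Q|^2)\,{\rm d}\omega\lesssim\epsilon^2$, and comparing the two identities yields the exact relations $P(-\omega)=\gamma P(\omega)+aQ(\omega)$ and $Q(-\omega)=bP(\omega)-\gamma Q(\omega)$, so the same bound holds on $(-K,K)$. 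Both $P$ and $Q$ extend to entire functions of exponential type $\le\max(c_1,c_2)$ that are bounded on $\mathbb R$ by $\lesssim M$.

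\emph{Steps 2--3: from $P,Q$ back to $\hat f$, and the low/high splitting.} As functions of $\omega$, the four terms $\hat f_+(c_1\cdot)$, $\hat f_+(-c_1\cdot)$, $\hat f_-(c_2\cdot)$, $\hat f_-(-c_2\cdot)$ have Fourier spectra contained in $[-c_1,0]$, $[0,c_1]$, $[0,c_2]$, $[-c_2,0]$; splitting $P$ and $Q$ along the frequency half-lines and inverting the nondegenerate ($\gamma^2+ab=1$) $2\times2$ system on the positive part recovers $\hat f_\pm$, hence $\hat f=\hat f_++\hat f_-$, from $P$ and $Q$. It is essential to pass to $\hat f$: the pieces $\hat f_\pm$ decay only like $|\xi|^{-1}$ (jumps of $f_\pm$ at $0$), whereas these contributions cancel in $\hat f$ and, integrating by parts $n$ times in $\hat f(\xi)=\int_{-1}^1 f\,e^{-{\rm i}\xi y}\,{\rm d}y$, one gets $|\hat f(\xi)|\lesssim M|\xi|^{-n}$. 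Therefore, for any $s>0$,
\[
\|f\|_{L^2(-1,1)}^2=\frac1{2\pi}\int_{\mathbb R}|\hat f(\xi)|^2\,{\rm d}\xi\lesssim\int_{|\xi|<s}|\hat f(\xi)|^2\,{\rm d}\xi+\frac{M^2}{(2n-1)\,s^{2n-1}}.
\]
For the remaining integral one goes back to $P,Q$: on the range $|\xi|\lesssim K$, where $\hat f$ is directly accessible from the data, it is $\lesssim\epsilon^2$ by Step 1; for the intermediate range one uses that $\hat f$ (equivalently $P,Q$) is entire of exponential type $\le\max(c_1,c_2)$, bounded on $\mathbb R$ by $\lesssim M$, and of size $\lesssim\epsilon$ on $(-K,K)$, and invokes a quantitative analytic-continuation (two-constants type) estimate $|\hat f(\xi)|\lesssim M^{1-\mu}\epsilon^{\mu}$ for $|\xi|<s$, with a harmonic-measure exponent $\mu=\mu(s,K)\in(0,1)$ that decreases as $s$ grows. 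Collecting the three contributions gives
\[
\|f\|_{L^2(-1,1)}^2\lesssim\epsilon^2+\frac{M^2}{(2n-1)\,s^{2n-1}}+s\,M^2\epsilon^{2\mu(s)},
\]
and optimizing $s$ so as to balance the last two terms produces an effective frequency of the order of the quantity appearing in the denominator of \eqref{cfe}, hence \eqref{cfe}.

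The main obstacle is the last step: the analytic-continuation estimate must be made fully quantitative, with explicit control of the exponent $\mu$ and, above all, of how the constants and $\mu$ depend on $n$, since this is precisely what fixes the powers $K^{2/3}$, $|\ln\epsilon|^{1/4}$ and the factor $(6n-3)^3$ in \eqref{cfe}. A secondary difficulty is to carry out the disentangling of $f_+$ and $f_-$ in Step 2 with constants uniform in $n,\omega,K,M$.
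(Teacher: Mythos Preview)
Your overall strategy—Plancherel for $\|f\|$, split $\hat f$ into low and high frequency, control the high tail by the $H^n$ bound, and push the data to intermediate frequencies by analytic continuation—is close in spirit to the paper, but it diverges at one essential point, and that divergence hides the main gap.

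The paper never attempts to recover $\hat f$ (or $\hat f_\pm$) on a bounded frequency interval. It works instead with the integrated data functional
\[
I(s)=\int_0^s \omega^2\bigl(|u(-1,\omega)|^2+|u(1,\omega)|^2\bigr)\,{\rm d}\omega,
\]
which equals $\epsilon^2$ at $s=K$ by definition, extends to an entire function of $s$ of exponential type (your Step~1 formulas give this; it is Lemma~3.3 there), and can then be continued past $K$ by the two-constants Lemma~3.5 applied to $I(s)e^{-as}$. The connection between $I(\infty)$ and $\|f\|_{L^2}^2$ is supplied once, globally, by a Plancherel-type inequality (Lemma~3.1), and the split $I(\infty)=I(s)+\text{tail}$ replaces your low/high split of $\hat f$.

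In your scheme the analogous step fails. You assert that on $|\xi|\lesssim K$ the transform $\hat f$ is ``directly accessible from the data'' and then analytically continue $\hat f$. But $P(\omega),Q(\omega)$ mix $\hat f_+(\pm c_1\omega)$ and $\hat f_-(\pm c_2\omega)$; at each fixed $\omega$ these are two equations for four unknowns (the reflection identities $P(-\omega)=\gamma P(\omega)+aQ(\omega)$, $Q(-\omega)=bP(\omega)-\gamma Q(\omega)$ are not new relations—they reproduce the same two). Your ``spectral splitting along half-lines'' does disentangle them, but it is a Hilbert-transform–type projection and needs $P,Q$ on all of $\mathbb R$, not on $(-K,K)$. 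Consequently $\int_0^K(|P|^2+|Q|^2)\lesssim\epsilon^2$ gives no pointwise control of $\hat f$ on any interval, and the input required to analytically continue $\hat f$ is absent. If instead you continue $P,Q$ (which is legitimate, and is essentially what the paper does to $I$), you are back to bounding $\int_{|\omega|<s}(|P|^2+|Q|^2)$ rather than $\int_{|\xi|<s}|\hat f|^2$, and converting one to the other again requires the global projection. This is the genuine gap; the paper's device of continuing the \emph{integrated} observable $I(s)$ is precisely what sidesteps it.

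One remark in your favor: your insistence that the tail estimate be taken for $\hat f$ and not for $\hat f_\pm$ (since the latter only decay like $|\xi|^{-1}$ because of the jump at $0$) is correct and careful. The paper's Lemma~3.4 actually integrates by parts in $\int_0^1 e^{\pm ic_1\omega y}f_1(y)\,{\rm d}y$ as if $f_1^{(k)}(0)=0$, so it glosses over exactly this point. But that issue lives in the tail bound of $I$, and fixing or not fixing it does not rescue your low-frequency step.
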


\begin{rema}
The stability estimate \eqref{cfe} consists of two parts: the data discrepancy
and the high frequency tail. The former is of the Lipschitz type. The latter
decreases as K increases which makes the problem have an almost Lipschitz
stability. The result explains that the problem becomes more stable when higher
frequency data is used. The stability estimate \eqref{cfe} also implies the
uniqueness of the inverse source problem.
\end{rema}

\section{Proof of Theorem \ref{mr1}}

Consider the following two functions:
\begin{equation}\label{f12}
f_1(x) = \begin{cases}
f(x), \quad &x>0,\\
0, \quad &x<0,
\end{cases}
\quad\text{and}\quad
f_2(x) = \begin{cases}
0, \quad &x>0,\\
f(x), \quad &x<0.
\end{cases}
\end{equation}

\begin{lemm}\label{3.1}
Let $f \in L^2(-1,1)$ with ${\rm supp}f \subset (-1,1).$ We have
\[
\|f\|^2_{L^2(-1,1)}\lesssim \int_{0}^{+\infty} \omega^2 \left(|u(-1,\omega)|^2 +
|u(1,\omega)|^2\right){\rm d}\omega.
\]
\end{lemm}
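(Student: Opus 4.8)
The plan is to reduce the $L^2$ norm of $f$ on $(-1,1)$ to an integral over all frequencies of the boundary values, using the explicit Green function. First I would use the decomposition $f = f_1 + f_2$ from \eqref{f12} and the explicit form of $g$ to write the two boundary data $u(1,\omega)$ and $u(-1,\omega)$ as integral transforms of $f_1$ and $f_2$. Since $\kappa_j = c_j\omega$, evaluating \eqref{sol} at $x=1$ (where $y>0$ so the first case of the first Green function applies) gives, after pulling out the common factor $\mathrm{i}/(2\kappa_1)$ and the boundary factor $e^{\mathrm{i}\kappa_1}$, an expression proportional to $\int_0^1 \left(\tfrac{\kappa_1-\kappa_2}{\kappa_1+\kappa_2} + 1\right) e^{\mathrm{i}\kappa_1 y} f(y)\,\mathrm{d}y = \tfrac{2\kappa_1}{\kappa_1+\kappa_2}\int_0^1 e^{\mathrm{i}c_1\omega y} f_1(y)\,\mathrm{d}y$, i.e. essentially the Fourier transform $\widehat{f_1}$ evaluated at $-c_1\omega$ (extending $f_1$ by zero to $\mathbb R$, using $\mathrm{supp}\,f_1\subset[0,1)$). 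Likewise $u(-1,\omega)$ with the third case of the first Green function is proportional to $\int_0^1 e^{\mathrm{i}c_1\omega y} f_1(y)\,\mathrm{d}y$ up to a smooth nonvanishing prefactor, and the two data with $f_2$ give the Fourier transform of $f_2$ at $\pm c_2\omega$.

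The key step is then the change of variables $\xi = c_j\omega$ in the frequency integral, which turns $\int_0^{+\infty}\omega^2 |u(\pm1,\omega)|^2\,\mathrm{d}\omega$ into (a constant times) $\int_{-\infty}^{0} |\widehat{f_j}(\xi)|^2\,\mathrm{d}\xi$ or $\int_0^{+\infty}$, because the $\omega^2$ weight exactly cancels the $|\mathrm{i}/(2\kappa_1)|^{-2}\sim \omega^{-2}$ type factor coming from the Green function (and the remaining prefactors like $\kappa_1/(\kappa_1+\kappa_2)$ are bounded above and below away from zero uniformly in $\omega$). Combining the contributions at $x=1$ and $x=-1$ recovers the Fourier transform of $f_j$ on a full half-line (negative half-line from one end point, positive from the other, or vice versa), and the Plancherel theorem yields $\|f_j\|_{L^2(\mathbb R)}^2 \lesssim \int_0^{+\infty}\omega^2(|u(-1,\omega)|^2+|u(1,\omega)|^2)\,\mathrm{d}\omega$. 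Summing over $j=1,2$ and using $\|f\|_{L^2(-1,1)}^2 = \|f_1\|_{L^2}^2 + \|f_2\|_{L^2}^2$ finishes the proof.

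The main obstacle I anticipate is bookkeeping: one must check that, at each of the two end points, the relevant Green-function prefactor (a rational function of $\kappa_1,\kappa_2$, hence of $c_1\omega,c_2\omega$) is bounded from below by a positive constant uniformly for all $\omega\in(0,+\infty)$ — this is where the specific coefficients in $g$, in particular the cancellation that produces $\tfrac{2\kappa_1}{\kappa_1+\kappa_2}$ rather than something that vanishes, are essential. One should also be slightly careful that $f_1, f_2$ need not be continuous at $0$, but since they are merely $L^2$ with support in $(-1,1)$ this causes no difficulty for the Fourier/Plancherel argument. A minor point is handling the frequency window: the lemma integrates over $(0,+\infty)$, so no cutoff issues arise here, and the change of variables covers all of the half-line in $\xi$ as needed.
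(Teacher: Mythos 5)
The key computation in your plan is wrong, and the error hides the actual difficulty of the lemma. For $y\in(0,1)$ one has $|1-y|=1-y$, so
\[
g(1,y)={\rm i}e^{{\rm i}\kappa_1}\Big[\tfrac{\kappa_1-\kappa_2}{2\kappa_1(\kappa_1+\kappa_2)}e^{{\rm i}\kappa_1 y}+\tfrac{1}{2\kappa_1}e^{-{\rm i}\kappa_1 y}\Big],
\]
i.e.\ the reflected and the direct terms carry \emph{opposite} phases in $y$. Hence the $f_1$-part of $u(1,\omega)$ is a combination of the two different Fourier data $\int_0^1 e^{{\rm i}c_1\omega y}f_1\,{\rm d}y$ and $\int_0^1 e^{-{\rm i}c_1\omega y}f_1\,{\rm d}y$ with different coefficients; it does not collapse to $\frac{2\kappa_1}{\kappa_1+\kappa_2}\hat f_1(-c_1\omega)$ as you claim (compare the explicit representation of $\omega u(1,\omega)$ in the proof of Lemma \ref{3.2}). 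Moreover $u(1,\omega)$ also contains the transmitted $f_2$-contribution $\frac{{\rm i}e^{{\rm i}\kappa_1}}{\kappa_1+\kappa_2}\int_{-1}^0e^{-{\rm i}\kappa_2 y}f_2\,{\rm d}y$, and $u(-1,\omega)$ contains $\int_0^1 e^{{\rm i}\kappa_1y}f_1\,{\rm d}y$ together with both $\int_{-1}^0e^{\pm{\rm i}\kappa_2 y}f_2\,{\rm d}y$.

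This creates the structural gap: at each frequency you have two data values, each a linear combination of three of the four unknowns $\int_0^1 e^{\pm{\rm i}c_1\omega y}f_1\,{\rm d}y$, $\int_{-1}^0 e^{\pm{\rm i}c_2\omega y}f_2\,{\rm d}y$. Taking moduli gives upper bounds (that is exactly Lemma \ref{3.2}), but it gives no lower bound on any individual term, since nothing in your argument excludes destructive interference between the $f_1$- and $f_2$-contributions, or between $\hat f_1(c_1\omega)$ and $\hat f_1(-c_1\omega)$. So the sentence ``combining the contributions at $x=1$ and $x=-1$ recovers the Fourier transform of $f_j$ on a full half-line'' is precisely the statement to be proved, not a consequence of the representation: pointwise in $\omega$ the system is underdetermined (four unknowns, two equations), and disentangling it is the whole content of Lemma \ref{3.1}. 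The paper's proof is devoted to this step: it multiplies the equation by $e^{-{\rm i}\xi x}$ with $\xi=\pm\kappa_1$ on $(0,1)$ and $\xi=\pm\kappa_2$ on $(-1,0)$, computes the interface Cauchy data $u(0,\omega),u'(0,\omega)$ from the Green function (these involve only $\hat f_1(-\kappa_1)$ and $\hat f_2(\kappa_2)$), and uses the radiation conditions \eqref{orc} to bound first $\hat f_1(-\kappa_1)$, $\hat f_2(\kappa_2)$ and then $\hat f_1(\kappa_1)$, $\hat f_2(-\kappa_2)$ in terms of the boundary data, before invoking Plancherel. By contrast, the obstacle you flag (uniform lower bounds on the rational prefactors, such as $\frac{2c_1}{c_1+c_2}$) is harmless and not where the difficulty lies; without an argument that separates the mixed Fourier components — using the equation/interface/radiation structure or the support properties of $f_1,f_2$ — the proposed Plancherel step does not go through.
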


\begin{proof}
Choosing $\xi_1 \in \mathbb R$ with $|\xi_1| = \kappa_1$, multiplying both sides
of \eqref{ode} by $e^{-{\rm i}\xi_1 x}$, and integrating over $(0,1)$ with
respect to $x$, we have from the integration by parts that 
\begin{equation}\label{ibp1}
e^{-{\rm i}\xi_1}u^{\prime}(1, \omega) + {\rm i}\xi_1 e^{-{\rm i}\xi_1}
u(1,\omega) - u^{\prime}(0,\omega)- {\rm i}\xi_1 u(0,\omega) = \int_0^1 e^{-{\rm
i}\xi_1 x} f_1(x){\rm d}x.
\end{equation}
Similarly, choosing $\xi_2 \in \mathbb R$ with $|\xi_2| = \kappa_2$, multiplying
both sides of \eqref{ode} by $e^{-{\rm i}\xi_2 x}$, and integrating over
$(-1,0)$ with respect to $x$, we have from the integration by parts that 
\begin{equation}\label{ibp2}
-e^{-{\rm i}\xi_2}u^{\prime}(-1, \omega) - {\rm i}\xi_2 e^{{\rm i}\xi_2}
u(-1,\omega) + u^{\prime}(0,\omega)+ {\rm i}\xi_2 u(0,\omega) = \int_{-1}^0
e^{-{\rm i}\xi_2 x} f_2(x){\rm d}x.
\end{equation}
It follows from \eqref{sol} and \eqref{f12} that
\[
u(x,\omega) = \int_0^1 g(x, y)f(y){\rm d}y= \int_0^1 g(x,y) f_1(y){\rm d}y +
\int_{-1}^0 g(x,y) f_2(y){\rm d}y,
\]
which gives
\begin{align}\label{u0}
u(0,\omega) &= \int_0^1 g(0,y) f_1(y){\rm d}y + \int_{-1}^0 g(0,y) f_2(y){\rm
d}y\notag\\
& = \int_0^1 \frac{\rm i}{\kappa_1 + \kappa_2}e^{{\rm i}\kappa_1 y}f_1(y){\rm
d}y + \int_{-1}^0 \frac{\rm i}{\kappa_1 + \kappa_2}e^{-{\rm i}\kappa_2
y}f_2(y){\rm d}y.
\end{align}
On the other hand, we have from a simple calculation that
\begin{align}\label{du0}
u^{\prime}(0,\omega) &= \int_0^1 g^{\prime}(0,y) f_1(y){\rm d}y + \int_{-1}^0
g^{\prime}(0,y) f_2(y){\rm d}y\notag\\
& = \int_0^1 \frac{\kappa_2}{\kappa_1 + \kappa_2}e^{{\rm i}\kappa_1 y}f_1(y){\rm
d}y + \int_{-1}^0 \frac{-\kappa_1}{\kappa_1 + \kappa_2}e^{-{\rm i}\kappa_2
y}f_2(y){\rm d}y.
\end{align}

Letting $\xi_1 = -\kappa_1$, we have from \eqref{u0} and \eqref{du0} that 
\begin{equation}\label{u1}
u^{\prime}(0,\omega)- {\rm i} \kappa_1 u(0,\omega) = \int_0^1 e^{{\rm i}\kappa_1
y}f_1(y){\rm d}y.
\end{equation}
Combining \eqref{u1} and \eqref{ibp1}, we obtain
\[
e^{{\rm i}\kappa_1}u^{\prime}(1, \omega) - {\rm i}\kappa_1 e^{{\rm
i}\kappa_1} u(1,\omega)  = 2\int_0^1 e^{{\rm i}\kappa_1 x} f_1(x){\rm d}x,
\]
Using the outgoing radiation condition \eqref{orc}, we get from the above
equation that 
\[
e^{{\rm i}\kappa_1}{\rm i}\kappa_1 u(1, \omega) - {\rm i}\kappa_1 e^{{\rm
i}\kappa_1} u(1,\omega)  = 2\int_0^1 e^{{\rm i}\kappa_1 x} f_1(x){\rm d}x,
\]
which implies
\begin{align}\label{1}
|\hat{f}_1(-\kappa_1)|^2 \lesssim \omega^2 |u(\omega, 1)|^2.
\end{align}

Letting $\xi_2 = \kappa_2$, we have from \eqref{u0} and \eqref{du0} that 
\begin{equation}\label{u2}
u^{\prime}(0,\omega) + {\rm i} \kappa_2 u(0,\omega) = -\int_{-1}^0 e^{-{\rm
i}\kappa_2 y}f_2(y){\rm d}y,
\end{equation}
Combining \eqref{u2}, \eqref{ibp2}, and \eqref{orc}, we obtain
\[
e^{-{\rm i}\kappa_2}{\rm i}\kappa_2u^{\prime}(-1, \omega) - {\rm i}\kappa_2
e^{{\rm i}\kappa_2} u(-1,\omega)  = 2\int_{-1}^0 e^{-{\rm i}\kappa_2 x}
f_2(x){\rm d}x,
\]
which shows
\begin{align}\label{2}
|\hat{f}_2(\kappa_2)|^2 \lesssim \omega^2 |u(\omega, -1)|^2.
\end{align}

Letting $\xi_1 = \kappa_1$, we get from \eqref{u0} and \eqref{du0} that 
\begin{align}\label{u3}
u^{\prime}(0,\omega) + {\rm i} \kappa_1 u(0,\omega)& = \int_0^1 \frac{\kappa_2 -
\kappa_1}{\kappa_1 + \kappa_2}e^{{\rm i}\kappa_1 y} f_1(y){\rm d}y - \int_{-1}^0
\frac{2\kappa_1}{\kappa_1 + \kappa_2}e^{-{\rm i}\kappa_2 y} f_2(y){\rm
d}y\notag\\
&= \frac{\kappa_2 - \kappa_1}{\kappa_1 + \kappa_2} \hat{f}_1 (-\kappa_1) -
\frac{2\kappa_1}{\kappa_1 + \kappa_2}\hat{f}_2 (\kappa_2).
\end{align}
It follows from \eqref{u3}, \eqref{ibp1}, and \eqref{orc} that we obtain
\begin{align*}
e^{-{\rm i}\kappa_1}{\rm i}\kappa_1 u(1, \omega) + {\rm i}\kappa_1 e^{-{\rm
i}\kappa_1} u(1,\omega)  -\frac{\kappa_2 - \kappa_1}{\kappa_1 + \kappa_2}
\hat{f}_1 (-\kappa_1) + \frac{2\kappa_1}{\kappa_1 + \kappa_2}\hat{f}_2
(\kappa_2)= \hat{f}_1 (\kappa_1),
\end{align*}
which means
\begin{align}\label{3}
|\hat{f}_1 (\kappa_1)|^2 \lesssim \omega^2|u(1, \omega)| + |\hat{f}_1 (-\kappa_1)|^2 + |\hat{f}_2 (\kappa_2)|^2.
\end{align}

Finally, letting $\xi_2 = -\kappa_2$, we have from \eqref{u0} and \eqref{du0}
that 
\begin{align}\label{u4}
u^{\prime}(0,\omega) - {\rm i} \kappa_2 u(0,\omega)& = \int_0^1 \frac{2\kappa_2
}{\kappa_1 + \kappa_2}e^{{\rm i}\kappa_1 y} f_1(y){\rm d}y + \int_{-1}^0
\frac{\kappa_2 - \kappa_1}{\kappa_1 + \kappa_2}e^{-{\rm i}\kappa_2 y} f_2(y){\rm
d}y\notag\\
&= \frac{2\kappa_2 }{\kappa_1 + \kappa_2} \hat{f}_1 (-\kappa_1) + \frac{\kappa_2
- \kappa_1}{\kappa_1 + \kappa_2}\hat{f}_2 (\kappa_2).
\end{align}
Using \eqref{u4}, \eqref{ibp2}, and \eqref{orc}, we have
\begin{align*}
e^{{\rm i}\kappa_2}{\rm i}\kappa_2 u(-1, \omega) + {\rm i}\kappa_2 e^{-{\rm
i}\kappa_2} u(-1,\omega)  +\frac{2\kappa_2 }{\kappa_1 + \kappa_2} \hat{f}_1
(-\kappa_1) + \frac{\kappa_2 - \kappa_1}{\kappa_1 + \kappa_2}\hat{f}_2
(\kappa_2) = \hat{f}_2 (-\kappa_2),
\end{align*}
which means
\begin{align}\label{4}
|\hat{f}_2 (-\kappa_2)|^2 \lesssim \omega^2 |u(-1,\omega)|^2 + |\hat{f}_1 (-\kappa_1)| + |\hat{f}_2 (\kappa_2)|.
\end{align}

Therefore, it follows from \eqref{1} that we get
\begin{align}\label{5}
\int_0^{\infty} |\hat{f}_1 (-\kappa_1)|^2{\rm d}\omega \lesssim \int_0^{\infty}
\omega^2 |u(\omega, 1)|^2{\rm d}\omega.
\end{align}
Using \eqref{2} gives 
\begin{align}\label{6}
\int_0^{\infty} |\hat{f}_2 (\kappa_2)|^2{\rm d}\omega \lesssim \int_0^{\infty}
\omega^2 |u(-1, \omega)|^2{\rm d}\omega.
\end{align}
It follows from \eqref{3}, \eqref{5}, and \eqref{6} that we have
\begin{align}\label{7}
\int_0^{\infty} |\hat{f}_1 (\kappa_1)|^2{\rm d}\omega \lesssim \int_0^{\infty}
\omega^2 |u(-1, \omega)|^2{\rm d}\omega + \int_0^{\infty} \omega^2 |u(1,
\omega)|^2{\rm d}\omega;
\end{align}
Finally following from \eqref{4}, \eqref{5}, and \eqref{6}, we obtain
\begin{align}\label{8}
\int_0^{\infty} |\hat{f}_2 (-\kappa_2)|^2{\rm d}\omega \lesssim \int_0^{\infty}
\omega^2 |u(-1, \omega)|^2{\rm d}\omega + \int_0^{\infty} \omega^2 |u(1,
\omega)|^2{\rm d}\omega.
\end{align}

We obtain from the Plancherel theorem that 
\begin{align*}
\|f\|_{L^2(-1,1)}^2 = \|f\|_{L^2(-\infty,\infty)}^2 =
\|\hat{f}\|_{L^2(-\infty,\infty)}^2 
\lesssim \|\hat{f}_1\|_{L^2(-\infty,\infty)}^2 +
\|\hat{f}_2\|_{L^2(-\infty,\infty)}^2.
\end{align*}
On the other hand, we have
\[
\|\hat{f}_1\|_{L^2(-\infty,\infty)}^2 = \int_0^{\infty}|\hat{f}_1(\omega)|^2
{\rm d}\omega + \int_0^{\infty}|\hat{f}_1(-\omega)|^2 {\rm d}\omega.
\]
Using \eqref{5} and \eqref{7} yields
\[
\|\hat{f}_1\|_{L^2(-\infty,\infty)}^2 \lesssim \int_{0}^{+\infty} \omega^2
\left(|u(-1,\omega)|^2 + |u(1,\omega)|^2\right){\rm d}\omega.
\]
Similarly, we have from \eqref{6} and \eqref{8} that
\[
\|\hat{f}_2\|_{L^2(-\infty,\infty)}^2 \lesssim \int_{0}^{+\infty} \omega^2
\left(|u(-1,\omega)|^2 + |u(1,\omega)|^2\right){\rm d}\omega.
\]
The proof is completed by combining the above estimates. 
\end{proof}

\begin{lemm}\label{3.2}
Let $f \in L^2(-1,1)$. We have
\begin{align*}
\omega^2|u(-1,\omega)|^2 &\lesssim \left| \int_0^1 e^{{\rm i}c_1 \omega
y}f_1(y){\rm d}y \right|^2 + \left| \int_{-1}^0 e^{-{\rm i}c_2
\omega y}f_2(y){\rm d}y \right|^2 + \left|\int_{-1}^0 e^{{\rm i}c_2 \omega
y}f_2(y){\rm d}y \right|^2,\\
\omega^2|u(1,\omega)|^2 &\lesssim \left| \int_0^1 e^{{\rm i}c_1 \omega
y}f_1(y){\rm d}y \right|^2 + \left| \int_0^1 e^{-{\rm i}c_1 \omega y}f_1(y){\rm
d}y \right|^2 + \left|\int_{-1}^0 e^{-{\rm i}c_2 \omega y}f_2(y){\rm d}y
\right|^2.
\end{align*}
\end{lemm}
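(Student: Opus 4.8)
The plan is to read both estimates directly off the explicit solution formula \eqref{sol}, evaluated at the endpoints $x=\pm1$. Writing $f=f_1+f_2$ as in \eqref{f12} gives
\[
u(x,\omega)=\int_0^1 g(x,y)f_1(y)\,{\rm d}y+\int_{-1}^0 g(x,y)f_2(y)\,{\rm d}y ,
\]
so it suffices to substitute $x=\pm1$ and insert the correct branch of the piecewise Green function. For $x=-1<0$ I would use $g(-1,y)=\frac{\rm i}{\kappa_1+\kappa_2}e^{{\rm i}(\kappa_1 y+\kappa_2)}$ when $y>0$, and $g(-1,y)={\rm i}\frac{\kappa_2-\kappa_1}{2\kappa_2(\kappa_1+\kappa_2)}e^{{\rm i}\kappa_2(1-y)}+\frac{\rm i}{2\kappa_2}e^{{\rm i}\kappa_2(1+y)}$ when $y\in(-1,0)$, using $|x-y|=1+y$ there. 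For $x=1>0$ I would use $g(1,y)={\rm i}\frac{\kappa_1-\kappa_2}{2\kappa_1(\kappa_1+\kappa_2)}e^{{\rm i}\kappa_1(1+y)}+\frac{\rm i}{2\kappa_1}e^{{\rm i}\kappa_1(1-y)}$ when $y\in(0,1)$ (now $|x-y|=1-y$), and $g(1,y)=\frac{\rm i}{\kappa_1+\kappa_2}e^{{\rm i}(\kappa_1-\kappa_2 y)}$ when $y<0$.

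Next I would pull the unimodular phases $e^{\pm{\rm i}\kappa_1}$ and $e^{\pm{\rm i}\kappa_2}$ out of each term and substitute $\kappa_j=c_j\omega$. After this substitution every prefactor is bounded by $C\omega^{-1}$; in particular the seemingly $O(1)$ factor $\frac{\kappa_2-\kappa_1}{2\kappa_2(\kappa_1+\kappa_2)}$ equals $\frac{c_2-c_1}{2c_2(c_1+c_2)}\,\omega^{-1}$, and similarly for $\frac{\kappa_1-\kappa_2}{2\kappa_1(\kappa_1+\kappa_2)}$. Consequently
\[
|u(-1,\omega)|\lesssim\frac{1}{\omega}\left(\left|\int_0^1 e^{{\rm i}c_1\omega y}f_1(y)\,{\rm d}y\right|+\left|\int_{-1}^0 e^{-{\rm i}c_2\omega y}f_2(y)\,{\rm d}y\right|+\left|\int_{-1}^0 e^{{\rm i}c_2\omega y}f_2(y)\,{\rm d}y\right|\right),
\]
and analogously $|u(1,\omega)|$ is $\lesssim\omega^{-1}$ times the sum of $\bigl|\int_0^1 e^{{\rm i}c_1\omega y}f_1(y)\,{\rm d}y\bigr|$, $\bigl|\int_0^1 e^{-{\rm i}c_1\omega y}f_1(y)\,{\rm d}y\bigr|$ and $\bigl|\int_{-1}^0 e^{-{\rm i}c_2\omega y}f_2(y)\,{\rm d}y\bigr|$. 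Multiplying through by $\omega^2$ and applying the elementary inequality $(a+b+c)^2\le 3(a^2+b^2+c^2)$ then yields the two claimed bounds.

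I do not expect a genuine obstacle: the argument is deliberately lossy and exploits no cancellation among the oscillatory integrals, only the triangle inequality and the size of the coefficients. The only points needing care are the bookkeeping of which branch of the piecewise Green function applies at $x=\pm1$ together with the correct reduction of $|x-y|$ there, and the verification that, once $\kappa_j=c_j\omega$ is substituted, every $\kappa$-dependent coefficient actually carries the power $\omega^{-1}$ that makes the factor $\omega^2$ on the left-hand side exactly absorbable.
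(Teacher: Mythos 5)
Your proposal is correct and is essentially the same argument as the paper's: the authors likewise write $\omega u(\pm 1,\omega)$ via the explicit Green's function at the endpoints (where each coefficient carries the factor $\omega^{-1}$ you identify) and then bound the modulus squared of the resulting three-term sums. Your branch choices, reductions of $|x-y|$, and the extracted phases all agree with the paper's formulas, so the proof goes through as you describe.
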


\begin{proof}
It follows from \eqref{sol} that we have 
\begin{align*}
\omega u(-1,\omega) = & \int_0^1 \frac{\rm i}{c_1 + c_2}e^{{\rm i}(c_1 \omega y+
c_2 \omega)}f_1(y){\rm d}y + \int_{-1}^{0}  \frac{{\rm i}(c_2 - c_1)}{2c_2(c_1 +
c_2)}e^{-{\rm i}c_2 \omega(-1+y)}f_2(y){\rm d}y\\
&+ \int_{-1}^{0}\frac{\rm i}{2c_2}e^{-{\rm i}c_2 \omega(-1-y)}f_2(y){\rm d}y
\end{align*}
and
\begin{align*}
\omega u(1,\omega)  = &\int_0^1  \frac{{\rm i}(c_1 - c_2)}{2c_1(c_1 +
c_2)}e^{{\rm i}c_1 \omega(1+y)}f_1(y){\rm d}y + \int_0^1 \frac{\rm
i}{2c_1}e^{{\rm i}c_1\omega (1-y)} f_1(y) {\rm d}y\\
&+ \int_{-1}^0 \frac{\rm i}{c_1 + c_2}e^{{\rm i}(- c_2 \omega y + c_1\omega )}
f_2(y) {\rm d}y.
\end{align*}
The proof is done by taking square of the amplitudes on both sides of the
above equations.
\end{proof}

Next, let 
\[
I(s) = I_1(s) + I_2(s),
\]
where 
\[
I_1(s) = \omega^2 \int_0^s
|u(-1,\omega)|^2{\rm d}\omega, \quad I_2(s) = \omega^2 \int_0^s
|u(1,\omega)|^2{\rm d}\omega. 
\]
We have the following explicit representations for
$I_1(s)$ and $I_2(s)$:
\begin{align}\label{i1}
I_1(s) = &\int_0^s \bigg|\int_0^1 \frac{1}{c_1 + c_2}e^{{\rm i}(c_1 \omega y+
c_2 \omega)}f_1(y){\rm d}y + \int_{-1}^{0}  \frac{c_2 - c_1}{2c_2(c_1 +
c_2)}e^{-{\rm i}c_2 \omega(-1+y)}f_2(y){\rm d}y\notag\\
&+ \int_{-1}^{0}\frac{1}{2c_2}e^{-{\rm i}c_2 \omega(-1-y)}f_2(y){\rm d}y
\bigg|^2 {\rm d}\omega
\end{align}
and
\begin{align}\label{i2}
I_2(s) = & \int_0^s \bigg| \int_0^1  \frac{c_1 - c_2}{2c_1(c_1 + c_2)}e^{{\rm
i}c_1 \omega(1+y)}f_1(y){\rm d}y +
\int_0^1 \frac{1}{2c_1}e^{{\rm i}c_1\omega (1-y)} f_1(y) {\rm d}y \notag\\
& + \int_{-1}^0 \frac{1}{c_1 + c_2}e^{{\rm i}(- c_2 \omega y + c_1\omega )}
f_2(y) {\rm d}y\bigg|^2 {\rm d}\omega .
\end{align}

\begin{lemm}\label{3.3}
Let $f\in L^2(-1,1)$ and $c_{\rm max} = {\rm max}\{c_1, c_2\}$. We have for any
$s = s_1 + {\rm i}s_2, s_1, s_2\in \mathbb R$ that
\begin{align*}
|I_1(s)|&\lesssim |s|e^{4c_{\rm max}|s_2|}\int_0^1|f(y)|^2 {\rm d}y,\\
|I_2(s)|&\lesssim |s|e^{4c_{\rm max}|s_2|}\int_0^1|f(y)|^2 {\rm d}y.
\end{align*}
\end{lemm}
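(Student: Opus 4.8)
The plan is to read \eqref{i1} and \eqref{i2} as contour integrals in the complex $\omega$-plane over the segment $[0,s]$, after first making the analytic continuation of the integrand explicit. Write $\omega u(-1,\omega)=F_1(\omega)$ and $\omega u(1,\omega)=F_2(\omega)$; by \eqref{sol} each $F_j$ is an entire function of $\omega$ of the form $\sum_k a_k\int_{D_k}e^{{\rm i}\phi_k(\omega,y)}f(y)\,{\rm d}y$, where the index set is finite, $D_k\in\{(0,1),(-1,0)\}$, each $a_k$ is a fixed constant depending only on $c_1,c_2$, and each phase $\phi_k(\omega,y)=c_k(y)\,\omega$ is linear in $\omega$ with a real coefficient $c_k(y)$. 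For real $\omega$ one has $\omega^2|u(\pm1,\omega)|^2=F_j(\omega)F_j^{*}(\omega)$, where $F_j^{*}(\omega):=\sum_k\overline{a_k}\int_{D_k}e^{-{\rm i}\phi_k(\omega,y)}\overline{f(y)}\,{\rm d}y$ is again entire and coincides with $\overline{F_j(\omega)}$ on the real axis. Thus the entire continuation of the integrand in \eqref{i1}--\eqref{i2} is $F_jF_j^{*}$, and $I_j(s)=\int_0^s F_j(\omega)F_j^{*}(\omega)\,{\rm d}\omega$ is the corresponding continuation of $I_j$.

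Next I would parametrize the segment by $\omega=st$, $t\in[0,1]$, so that $\operatorname{Im}\omega=s_2t$ and $|\operatorname{Im}\omega|\le|s_2|$, and bound $F_j$ and $F_j^{*}$ pointwise along it. The key observation is that for every phase appearing in \eqref{i1}--\eqref{i2} the coefficient $c_k(y)$ of $\omega$ satisfies $|c_k(y)|\le 2c_{\rm max}$: inspecting the exponents, $c_k(y)$ is one of $c_1y+c_2$, $c_2(1\pm y)$, $c_1(1\pm y)$, or $c_1-c_2y$ with $|y|\le1$, each of which is bounded in absolute value by $c_1+c_2$, $2c_1$, or $2c_2$, hence by $2c_{\rm max}$. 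Consequently $|e^{\pm{\rm i}\phi_k(\omega,y)}|=e^{\mp c_k(y)\operatorname{Im}\omega}\le e^{2c_{\rm max}|s_2|}$ uniformly in $y\in D_k$ and $t\in[0,1]$. Applying the Cauchy--Schwarz inequality in $y$ to each term, and using $|D_k|\le2$, gives $|a_k|\int_{D_k}|e^{{\rm i}\phi_k(\omega,y)}|\,|f(y)|\,{\rm d}y\lesssim e^{2c_{\rm max}|s_2|}\|f\|_{L^2(-1,1)}$; summing over the finitely many terms yields $|F_j(\omega)|\lesssim e^{2c_{\rm max}|s_2|}\|f\|_{L^2(-1,1)}$ and, by the same argument, $|F_j^{*}(\omega)|\lesssim e^{2c_{\rm max}|s_2|}\|f\|_{L^2(-1,1)}$ for every $\omega$ on the segment.

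Finally I would combine these bounds: on the segment $|F_j(\omega)F_j^{*}(\omega)|\lesssim e^{4c_{\rm max}|s_2|}\|f\|^2_{L^2(-1,1)}$, and estimating the contour integral by the length $|s|$ of the path times the supremum of the integrand gives $|I_j(s)|=\bigl|\int_0^1 F_j(st)F_j^{*}(st)\,s\,{\rm d}t\bigr|\le|s|\sup_{t\in[0,1]}|F_j(st)F_j^{*}(st)|\lesssim|s|e^{4c_{\rm max}|s_2|}\|f\|^2_{L^2(-1,1)}$, which is the claimed estimate (with $\int_0^1|f(y)|^2\,{\rm d}y$ understood as the full $L^2(-1,1)$ energy, into which the $f_2$-terms also feed). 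I do not expect a genuine obstacle here: the only points that require care are (i) recording precisely that the symbol $|\cdot|^2$ in \eqref{i1}--\eqref{i2} at complex argument must be interpreted as $F_jF_j^{*}$, so that $I_j$ is truly holomorphic, and (ii) the elementary if slightly tedious verification that every exponential kernel occurring in \eqref{i1}--\eqref{i2} obeys the uniform bound $e^{2c_{\rm max}|s_2|}$ along $[0,s]$; the remaining steps are just Cauchy--Schwarz and the trivial length estimate for a path integral.
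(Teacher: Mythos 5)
Your proposal is correct and follows essentially the same route as the paper: parametrize the segment by $\omega = st$, bound each exponential kernel uniformly by $e^{2c_{\rm max}|s_2|}$, apply the Cauchy--Schwarz inequality in $y$, and pick up the factor $|s|$ from the path length. Your explicit remark that for complex $s$ the squared modulus must be read as the product $F_jF_j^{*}$ of two entire functions is a welcome clarification that the paper leaves implicit (and is what makes the later application of Lemma \ref{3.5} legitimate), but it does not change the substance of the argument.
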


\begin{proof}
Let $\omega = st, t\in(0,1).$ A simple calculation yields
\begin{align*}
I_1(s) = & s \int_0^1 \bigg|\int_0^1 \frac{1}{c_1 + c_2}e^{{\rm i}(c_1 st y+ c_2
st)}f_1(y){\rm d}y +
\int_{-1}^{0}  \frac{c_2 - c_1}{2c_2(c_1 + c_2)}e^{-{\rm i}c_2 st(-1+y)}f_2(y){\rm d}y\notag\\
&+ \int_{-1}^{0}\frac{1}{2c_2}e^{-{\rm i}c_2 st(-1-y)}f_2(y){\rm d}y
\bigg|^2 {\rm d}t
\end{align*}
and
\begin{align*}
I_2(s) = & s\int_0^1 \bigg| \int_0^1  \frac{c_1 - c_2}{2c_1(c_1 + c_2)}e^{{\rm
i}c_1 st(1+y)}f_1(y){\rm d}y +
\int_0^1 \frac{1}{2c_1}e^{{\rm i}c_1 st (1-y)} f_1(y) {\rm d}y \notag\\
& + \int_{-1}^0 \frac{1}{c_1 + c_2}e^{{\rm i}(- c_2 st y + c_1 st )} f_2(y) {\rm
d}y\bigg|^2 {\rm d}t.
\end{align*}
Noting 
\[
|e^{\pm {\rm i}(c_1 st y+ c_2 st)}|\leq e^{2c_{\rm max}|s_2|}, \quad |e^{\pm
{\rm i}c_2 st(-1\pm y)}|\leq e^{2c_{\rm max}|s_2|}, \quad\forall y\in (-1,1), 
\]
we have from the Schwartz inequality that 
\begin{align*}
|I_1(s)|\lesssim |s|e^{4c_{\rm max}|s_2|}\int_{-1}^1|f(y)|^2 {\rm d}y.
\end{align*}
Similarly noting
\[
|e^{\pm {\rm i}c_1 st(1\pm y)}|\leq e^{2c_{\rm max}|s_2|}, \quad |e^{\pm
{\rm i}(- c_2 st y + c_1 st )}|\leq e^{2c_{\rm max}|s_2|}, \quad\forall y\in
(-1,1),
\] 
we get from the Schwartz inequality that
\begin{align*}
|I_2(s)|&\lesssim |s|e^{4c_{\rm max}|s_2|}\int_{-1}^1|f(y)|^2 {\rm d}y,
\end{align*}
which completes the proof. 
\end{proof}

\begin{lemm}\label{3.4}
Let $f \in H^n(-1,1), {\rm supp} f \subset (-1,1).$ We have for any $s>0$ that
\begin{align*}
\int_s^{\infty} \omega^2 (|u(-1,\omega)|^2 + |u(1,\omega)|^2){\rm
d}\omega\lesssim s^{-(2n-1)}\|f\|^2_{H^n(-1,1)}.
\end{align*}
\end{lemm}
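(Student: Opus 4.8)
The plan is to combine the pointwise bounds of Lemma~\ref{3.2} with an integration by parts that cashes in the $H^n$ regularity of $f$. First I would establish the pointwise decay
\[
\omega^2\bigl(|u(-1,\omega)|^2+|u(1,\omega)|^2\bigr)\lesssim \omega^{-2n}\,\|f\|_{H^n(-1,1)}^2,\qquad \omega>0,
\]
and then integrate it over $(s,\infty)$: since $\int_s^{\infty}\omega^{-2n}\,{\rm d}\omega=\frac{1}{2n-1}s^{-(2n-1)}\lesssim s^{-(2n-1)}$, this produces exactly the asserted bound. (The extra factor $\frac{1}{2n-1}$ is irrelevant for the present statement, though it is the sort of $n$-dependence that has to be tracked later.)

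To prove the pointwise estimate I would treat one representative term on the right-hand side of Lemma~\ref{3.2}, say $\int_0^1 e^{{\rm i}c_1\omega y}f_1(y)\,{\rm d}y=\int_0^1 e^{{\rm i}c_1\omega y}f(y)\,{\rm d}y$, and integrate by parts $n$ times in $y$, each time integrating the oscillatory factor $e^{{\rm i}c_1\omega y}$ and so gaining a factor $({\rm i}c_1\omega)^{-1}$. Because ${\rm supp}\,f\subset(-1,1)$, all derivatives $f^{(k)}$, $0\le k\le n-1$, vanish at $y=\pm1$, so the boundary terms at the two endpoints of the interval drop out; what is left is the bulk term $({\rm i}c_1\omega)^{-n}\int_0^1 e^{{\rm i}c_1\omega y}f^{(n)}(y)\,{\rm d}y$ (together with the interface terms discussed below), whose modulus is $\lesssim\omega^{-n}\|f^{(n)}\|_{L^2(-1,1)}\le\omega^{-n}\|f\|_{H^n(-1,1)}$ by Cauchy--Schwarz. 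The same computation applies verbatim to the other oscillatory integrals appearing in Lemma~\ref{3.2}, with $c_1$ replaced by $c_2$ and with the obvious shifts and sign changes, since in each of them the integration variable enters only through a pure exponential $e^{\pm{\rm i}c_j\omega y}$; squaring and summing then gives $\omega^2(|u(-1,\omega)|^2+|u(1,\omega)|^2)\lesssim\omega^{-2n}\|f\|_{H^n(-1,1)}^2$ modulo the interface contributions.

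The hard part will be exactly those interface contributions at $y=0$. The integrals in Lemma~\ref{3.2} are taken over $(0,1)$ and $(-1,0)$ separately, whereas $f_1$ and $f_2$ are only $L^2$ and in general do not vanish at the interface, so a single integration by parts already leaves a term of size $\omega^{-1}$ carrying $f(0)$, which would by itself ruin the $\omega^{-n}$ decay. The way around it is to collect the interface terms coming from all the integrals that assemble into $\omega u(-1,\omega)$ and $\omega u(1,\omega)$ through \eqref{sol}, and to check, using the explicit coefficients of the Green function, whether the contributions of $f$ at the interface cancel against one another (or at least are no larger than the $\omega^{-n}$ bulk term); note that in a homogeneous medium, $c_1=c_2$, the reflection coefficients in the Green function vanish and this difficulty is absent, so it is genuinely a feature of the two-layered problem. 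Once the pointwise bound survives this step, the lemma follows by integrating over $\omega\in(s,\infty)$ as in the first paragraph.
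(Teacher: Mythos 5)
Your outline is the same as the paper's: bound $\omega^2(|u(-1,\omega)|^2+|u(1,\omega)|^2)$ via Lemma \ref{3.2}, integrate by parts $n$ times in each oscillatory integral to gain $\omega^{-n}$, then integrate $\omega^{-2n}$ over $(s,\infty)$. The genuine gap is precisely the step you flag and then defer: you never verify the interface cancellation at $y=0$, and in fact it does not occur. Keeping only the boundary terms at $y=0$ produced by one integration by parts (those at $y=\pm1$ vanish since ${\rm supp}\,f\subset(-1,1)$), the three integrals representing $\omega u(1,\omega)$ give, for $f\in H^n$ with $n\geq 2$,
\begin{align*}
\omega u(1,\omega)&=\left[-\frac{c_1-c_2}{2c_1^2(c_1+c_2)}+\frac{1}{2c_1^2}-\frac{1}{c_2(c_1+c_2)}\right]\frac{f(0)\,e^{{\rm i}c_1\omega}}{\omega}+O(\omega^{-2})\\
&=\frac{(c_2-c_1)\,f(0)}{c_1^2c_2}\,\frac{e^{{\rm i}c_1\omega}}{\omega}+O(\omega^{-2}),
\end{align*}
and the analogous computation for $\omega u(-1,\omega)$ produces a leading coefficient proportional to $(c_1-c_2)f(0)/(c_1c_2^{2})$. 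Hence in the genuinely two-layered case $c_1\neq c_2$, if $f(0)\neq 0$ the interface contributions survive: $\omega^2(|u(-1,\omega)|^2+|u(1,\omega)|^2)$ decays only like $\omega^{-2}$ no matter how large $n$ is, and your scheme yields at best a tail bound of order $s^{-1}$, not $s^{-(2n-1)}$ for $n\geq 2$. So the "hard part" you postponed is not a technical check; it is exactly where the claimed rate can break down, and the hoped-for cancellation among the Green's-function coefficients is not available (your own observation that the difficulty disappears when $c_1=c_2$ is consistent with the factor $c_2-c_1$ above).

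For comparison, the paper's own proof never confronts this point: it asserts ${\rm supp}\,f_1\subset(0,1)$ and ${\rm supp}\,f_2\subset(-1,0)$ and integrates by parts $n$ times with no boundary terms at all, which amounts to using $f^{(k)}(0)=0$ for $0\leq k\leq n-1$ — a property that does not follow from $f\in H^n(-1,1)$ with ${\rm supp}\,f\subset(-1,1)$. In other words, you correctly identified the crux that the paper glosses over, but your proposed resolution does not close it: to obtain the stated rate one must either impose vanishing of $f$ and its first $n-1$ derivatives at the interface $x=0$ (so that $f_1\in H^n(0,1)$ and $f_2\in H^n(-1,0)$ integrate by parts without boundary terms, as in the paper's computation), or accept the weaker decay dictated by the jump of $f$ at $0$. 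The remaining parts of your argument (the reduction via Lemma \ref{3.2}, the bulk estimate by Cauchy--Schwarz, and the final integration $\int_s^\infty\omega^{-2n}{\rm d}\omega=\frac{s^{-(2n-1)}}{2n-1}$) coincide with the paper and are fine.
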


\begin{proof}
It follows from Lemma \ref{3.2} that we have
\begin{align*}
&\int_s^{\infty} \omega^2|u(-1, \omega)|^2 {\rm d}\omega + \int_s^{\infty}
\omega^2|u(1, \omega)|^2 {\rm d}\omega\\ 
\lesssim &\int_s^{\infty} \left| \int_0^1 e^{{\rm i}c_1 \omega y}f_1(y){\rm
d}y\right|^2{\rm d}\omega + \int_s^{\infty} \left|\int_0^1 e^{-{\rm i}c_1 \omega
y}f_1(y){\rm d}y \right|^2{\rm d}\omega\\
& + \int_s^{\infty} \left| \int_{-1}^0 e^{{\rm i}c_2 \omega y}f_2(y){\rm
d}y\right|^2{\rm d}\omega + \int_s^{\infty} \left|\int_{-1}^0 e^{-{\rm i}c_2
\omega y}f_2(y){\rm d}y \right|^2{\rm d}\omega.
\end{align*}
Using the integration by parts and noting ${\rm supp}f_1 \subset (0,1)$ and
${\rm supp}f_2 \subset (-1,0)$, we obtain
\[
\int_0^1 e^{\pm{\rm i}c_1 \omega y}f_1(y){\rm d}y = \frac{1}{(\pm {\rm
i}c_1\omega)^n}\int_0^1 e^{\pm{\rm i}c_1 \omega y}f_1^{(n)}(y){\rm d}y
\]
and
\[
\int_{-1}^0 e^{\pm{\rm i}c_2 \omega y}f_2(y){\rm d}y = \frac{1}{(\pm {\rm
i}c_2\omega)^n}\int_{-1}^0 e^{\pm{\rm i}c_2 \omega y}f_2^{(n)}(y){\rm d}y,
\]
which give
\[
\left|\int_0^1 e^{\pm{\rm i}c_1 \omega y}f_1(y){\rm d}y\right|^2 \lesssim 
c_1^{-2n}\omega^{-2n}\|f_1\|^2_{H^n(0,1)}\lesssim
c_1^{-2n}\omega^{-2n}\|f\|^2_{H^n(-1,1)}
\]
and
\[
\left|\int_{-1}^0 e^{\pm{\rm i}c_2 \omega y}f_2(y){\rm d}y\right|^2 \lesssim 
c_2^{-2n}\omega^{-2n}\|f_2\|^2_{H^n(-1,0)}\lesssim
c_2^{-2n}\omega^{-2n}\|f\|^2_{H^n(-1,1)}.
\]
Hence we have
\begin{align*}
\int_s^{\infty}\left|\int_0^1 e^{\pm{\rm i}c_1 \omega y}f_1(y){\rm
d}y\right|^2{\rm d}\omega \lesssim 
c_1^{-2n}\|f_1\|^2_{H^n(0,1)}\int_s^{\infty}\omega^{-2n}{\rm d}\omega \lesssim
c_1^{-2n}\frac{s^{-(2n-1)}}{(2n-1)}\|f\|^2_{H^n(-1,1)}
\end{align*}
and
\begin{align*}
\int_s^{\infty}\left|\int_{-1}^0 e^{\pm{\rm i}c_2 \omega y}f_2(y){\rm
d}y\right|^2{\rm d}\omega \lesssim 
c_2^{-2n}\|f_2\|^2_{H^n(-1,0)}\int_s^{\infty}\omega^{-2n}{\rm d}\omega \lesssim
c_2^{-2n}\frac{s^{-(2n-1)}}{(2n-1)}\|f\|^2_{H^n(-1,1)},
\end{align*}
which completes the proof.
\end{proof}

The following lemma is proved in \cite{CIL-JDE16}. 

\begin{lemm}\label{3.5}
 Denote $S=\{z=x+{\rm i}y\in\mathbb{C}: -\frac{\pi}{4}<{\rm arg}
z<\frac{\pi}{4}\}$. Let $J(z)$ be analytic in $S$ and continuous in $\bar{S}$
satisfying
\[
 \begin{cases}
  |J(z)|\leq\epsilon, & z\in (0, ~ L],\\
  |J(z)|\leq V, & z\in S,\\
  |J(0)|=0.
 \end{cases}
\]
Then there exits a function $\mu(z)$ satisfying
\[
 \begin{cases}
  \mu(z)\geq\frac{1}{2},  & z\in(L, ~ 2^{\frac{1}{4}}L),\\
  \mu(z)\geq \frac{1}{\pi}((\frac{z}{L})^4-1)^{-\frac{1}{2}}, & z\in
(2^{\frac{1}{4}}L, ~ \infty)
 \end{cases}
\]
such that
\[
|J(z)|\leq V\epsilon^{\mu(z)}, \quad\forall\, z\in (L, ~ \infty).
\]
\end{lemm}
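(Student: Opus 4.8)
The plan is to run a classical Hadamard-type (two-constants) argument on the sector $S$, converting the interior smallness on $(0,L]$ into a boundary condition by slitting, and then to pin down the resulting exponent by an explicit harmonic-measure computation.

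First I would observe that $\log|J|$ is subharmonic on $S$, bounded above by $\log V$ throughout $S$ and by $\log\epsilon$ on the segment $(0,L]$; the boundedness $|J|\le V$ on the whole (unbounded) sector supplies the growth control at infinity needed for a Phragm\'en--Lindel\"of/maximum-principle argument, while the normalization $J(0)=0$ guarantees that $\log|J|$ is well behaved at the vertex (which, as a single boundary point, carries no harmonic measure). Since $(0,L]$ lies in the interior of $S$, I would pass to the slit domain $\Omega=S\setminus(0,L]$, on whose boundary $|J|\le\epsilon$ holds on both banks of the slit and $|J|\le V$ on the two bounding rays. The two-constants theorem then yields, for interior $z$,
\[
\log|J(z)|\le \mu(z)\log\epsilon+(1-\mu(z))\log V,\qquad \mu(z):=\omega\bigl(z,(0,L],\Omega\bigr),
\]
the harmonic measure of the slit. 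Because each real $z>L$ is an interior point of $\Omega$, exponentiating and using $V\ge1$, $0\le\mu\le1$ gives $|J(z)|\le V^{1-\mu(z)}\epsilon^{\mu(z)}\le V\epsilon^{\mu(z)}$, which is the desired form.

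It remains to compute $\mu$, and this is the crux. I would use the conformal map $w=z^2$, which sends the sector $S$ of opening $\pi/2$ onto the right half-plane $H=\{\operatorname{Re} w>0\}$, carries the slit $(0,L]$ to $(0,L^2]$, and sends a real point $z>L$ to $w=z^2>L^2$; thus $\Omega$ is mapped to the slit half-plane $H\setminus(0,L^2]$. The key move is the map $\Phi(w)=\sqrt{w^2-L^4}$ (the branch positive for real $w>L^2$): it sends $H\setminus(0,L^2]$ onto the right half-plane, collapsing the slit $(0,L^2]$ onto the boundary segment $\{i\eta:-L^2\le\eta\le L^2\}$ while the original boundary (the imaginary axis) goes to $\{i\eta:|\eta|\ge L^2\}$, and it sends $w=z^2$ to $\sqrt{z^4-L^4}>0$. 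Getting the branches right and checking that the two boundary pieces of the slit domain reassemble correctly on the imaginary axis is the delicate part. With this normalization the harmonic measure of a boundary segment in a half-plane equals the subtended angle divided by $\pi$, which (by conformal invariance of harmonic measure) gives
\[
\mu(z)=\frac{2}{\pi}\arctan\frac{L^2}{\sqrt{z^4-L^4}}.
\]

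Finally I would read off the two stated lower bounds from this closed form. At $z=2^{1/4}L$ one has $z^4-L^4=L^4$, so $\mu=\tfrac{2}{\pi}\arctan 1=\tfrac12$; since $\mu$ is decreasing in $z$, this gives $\mu(z)\ge\tfrac12$ on $(L,2^{1/4}L)$. For $z>2^{1/4}L$ set $x=L^2/\sqrt{z^4-L^4}=((z/L)^4-1)^{-1/2}\in(0,1)$; the elementary inequality $\arctan x\ge x/2$ on $[0,1]$ (which holds since $\tfrac{d}{dx}\arctan x=\tfrac{1}{1+x^2}\ge\tfrac12$ there) yields $\mu(z)=\tfrac{2}{\pi}\arctan x\ge\tfrac1\pi x=\tfrac1\pi((z/L)^4-1)^{-1/2}$, the second bound. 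The only genuine obstacle is the harmonic-measure computation; once the conformal picture is set up correctly, the remaining verifications are elementary bookkeeping.
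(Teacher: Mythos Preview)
The paper does not supply its own proof of this lemma; it simply cites \cite{CIL-JDE16}. Your argument is correct and is essentially the standard proof (and, as far as one can tell, the one in the cited reference): a two-constants/Phragm\'en--Lindel\"of estimate on the slit sector $\Omega=S\setminus(0,L]$, followed by the explicit uniformization $z\mapsto \sqrt{z^4-L^4}$ of $\Omega$ onto a half-plane to identify the harmonic measure of the slit as $\mu(z)=\tfrac{2}{\pi}\arctan\bigl(L^2/\sqrt{z^4-L^4}\bigr)$, and finally the elementary bounds $\mu(2^{1/4}L)=\tfrac12$ (with $\mu$ decreasing) and $\arctan x\ge x/2$ on $[0,1]$ to extract the stated lower bounds. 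The branch bookkeeping you flag is indeed the only point requiring care, and you have it right: $w\mapsto w^2$ takes $H\setminus(0,L^2]$ bijectively to $\mathbb{C}\setminus(-\infty,L^4]$, and the principal square root of $w^2-L^4$ then lands in $H$ with the slit going to $\{it:|t|\le L^2\}$ and the original imaginary axis to $\{it:|t|\ge L^2\}$.

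One minor remark: the step $V^{1-\mu}\epsilon^{\mu}\le V\epsilon^{\mu}$ uses $V\ge 1$, which is not hypothesized in the lemma as stated. In the paper's application one has $V\sim M^2>1$, so this is harmless; in general the two-constants argument yields the sharper $|J(z)|\le V^{1-\mu(z)}\epsilon^{\mu(z)}$, which is all that is needed downstream.
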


\begin{lemm}
 Let $f\in\mathcal{F}_M$. Then there exists a function
$\mu(s)$ satisfying
\begin{equation}\label{mu}
 \begin{cases}
  \mu(s)\geq\frac{1}{2}, \quad & s\in(K, ~ 2^{\frac{1}{4}}K),\\
  \mu(s)\geq \frac{1}{\pi}((\frac{s}{K})^4-1)^{-\frac{1}{2}}, \quad & s\in
(2^{\frac{1}{4}}K, ~\infty),
 \end{cases}
\end{equation}
such that
\[
 |I(s)|\lesssim M^2 e^{as}\epsilon^{2\mu(s)}, \quad\forall s\in (K, ~\infty),
\]
where $a = {\rm max}\{5c, 3\}$.
\end{lemm}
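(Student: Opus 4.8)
The plan is to obtain the bound by applying the quantitative analytic-continuation estimate of Lemma~\ref{3.5} to a renormalization of $I$ that is uniformly bounded on the sector $S$. First I would note that $I$ extends to an entire function of $s\in\mathbb C$: in \eqref{i1}--\eqref{i2} each squared modulus agrees on $(0,\infty)$ with the product of the underlying exponential integral and its companion obtained by conjugating the coefficients, both of which are entire in $s$; hence $I$ is analytic in $S$ and continuous on $\overline S$, with $I(0)=0$ because of the overall factor $s$ (visible after the rescaling $\omega=st$ of the proof of Lemma~\ref{3.3}). Moreover Lemma~\ref{3.3} provides, for this extension, $|I(s)|\lesssim|s|\,e^{4c_{\rm max}|s_2|}\,\|f\|_{L^2(-1,1)}^2$ for every $s=s_1+{\rm i}s_2$. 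I then set $J(z)=e^{-az}I(z)$ with $a=\max\{5c,3\}$, which is again entire with $J(0)=0$.

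Next I would verify the two size hypotheses of Lemma~\ref{3.5} with $L=K$ and with the small parameter there equal to $\epsilon^2$. On $(0,K]$ the integrands in \eqref{i1}--\eqref{i2} are nonnegative, so $0\le I(s)\le I(K)=\epsilon^2$ by the definition \eqref{ep}, and since $e^{-az}\le1$ for $z>0$ this gives $|J(z)|\le\epsilon^2$ on $(0,K]$. On the full sector, using $\|f\|_{L^2(-1,1)}\le\|f\|_{H^n(-1,1)}\le M$ together with the fact that every $z=z_1+{\rm i}z_2\in S$ satisfies $|z_2|\le z_1$ and $|z|\le\sqrt2\,z_1$, one gets
\[
 |J(z)|=e^{-az_1}|I(z)|\lesssim z_1\,e^{(4c_{\rm max}-a)z_1}\,M^2\lesssim M^2,
\]
the last inequality because the choice $a=\max\{5c,3\}$ ensures $a>4c_{\rm max}$, so that $z_1 e^{(4c_{\rm max}-a)z_1}$ is bounded on $(0,\infty)$. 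Thus Lemma~\ref{3.5} applies with $V\lesssim M^2$.

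Lemma~\ref{3.5} then yields a function $\mu$ — namely the function $\mu$ of that lemma with $L=K$, hence satisfying \eqref{mu} — such that $|J(z)|\le V(\epsilon^2)^{\mu(z)}=V\epsilon^{2\mu(z)}$ for all $z\in(K,\infty)$. Multiplying back by $e^{az}$ gives $|I(s)|=e^{as}|J(s)|\lesssim M^2 e^{as}\epsilon^{2\mu(s)}$ for $s\in(K,\infty)$, as claimed.

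The step that really has to be checked is the sector bound: one must make sure that the exponential damping $e^{-az}$ dominates the exponential growth $e^{4c_{\rm max}|z_2|}$ coming from Lemma~\ref{3.3}, \emph{uniformly over all of} $S$. This is precisely where the opening half-angle $\pi/4$ of $S$ matters — it forces $|\mathrm{Im}\,z|\le\mathrm{Re}\,z$ and so converts $e^{4c_{\rm max}|z_2|}$ into $e^{4c_{\rm max}z_1}$, after which the choice of $a$ (strictly larger than $4c_{\rm max}$, with extra room to absorb the polynomial factor $|z|$) closes the estimate. The remaining ingredients — nonnegativity on $(0,K]$, the identity $I(K)=\epsilon^2$, and the cited Lemma~\ref{3.5} — are routine.
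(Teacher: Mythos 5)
Your argument is correct and follows essentially the same route as the paper: renormalize by $e^{-as}$, bound the result by $M^2$ on the sector via Lemma~\ref{3.3} and by $\epsilon^2$ on $(0,K]$ via the definition \eqref{ep}, then invoke Lemma~\ref{3.5} with $L=K$. The only cosmetic differences are that the paper applies Lemma~\ref{3.5} to $I_1e^{-as}$ and $I_2e^{-as}$ separately before summing, while you treat $J=e^{-as}I$ directly, and you spell out the details (analytic continuation of the squared moduli and the absorption of $|s|e^{4c_{\rm max}|s_2|}$ into $e^{as}$ using $|s_2|\le s_1$ in $S$) that the paper leaves implicit.
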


\begin{proof}
It follows from Lemma \ref{3.3} that
\[
|I_1(s)e^{-as}|\lesssim M^2, \quad |I_2(s)e^{-as}|\lesssim M^2, \quad
s\in S.
\]
Recalling \eqref{ep}, \eqref{i1}, and \eqref{i2}, we have
\[
|I_1(s)e^{-as}|\lesssim \epsilon^2, \quad |I_2(s)e^{-as}|\lesssim
\epsilon^2,\quad s\in[0, K].
\]
A direct application of Lemma \ref{3.5} shows that there exists a function $\mu(s)$ satisfying \eqref{mu} such that
\[
|I_1(s)e^{-as}| \lesssim M^2 \epsilon^{2\mu}, \quad |I_2(s)e^{-as}| \lesssim
M^2 \epsilon^{2\mu},\quad s\in (K, \infty),
\]
then we have
\[
|I(s)e^{-as}| = |I_1(s)e^{-as} + I_2(s)e^{-as}| \lesssim M^2 \epsilon^{2\mu},
\quad s\in (K, \infty),
\]
which completes the proof.
\end{proof}

Now we show the proof of Theorem \ref{mr1}.

\begin{proof}
We can assume that $\epsilon <e^{-1}$, otherwise the estimate is
obvious. Let
\[
s=\begin{cases}
  \frac{1}{(3\pi)^{\frac{1}{3}}}K^{\frac{2}{3}}|\ln\epsilon|^{\frac{1}{4}}, &
2^{\frac{1}{4}} (3\pi)^{\frac{1}{3}}K^{\frac{1}{3}}<|\ln\epsilon|^{\frac{1}{4}},\\
K, &|\ln\epsilon|\leq 2^{\frac{1}{4}}(3\pi)^{\frac{1}{3}}K^{\frac{1}{3}}.
 \end{cases}
\]
If $2^{\frac{1}{4}}(3\pi)^{\frac{1}{3}}K^{\frac{1}{3}}<|\ln\epsilon|^{\frac{1}{4}}$, then we have
\begin{align*}
 |I(s)|&\lesssim M^2 e^{as}
e^{-\frac{2|\ln\epsilon|}{\pi}((\frac{s}{K})^4-1)^{-\frac{1}{2}}}\lesssim M^2
e^{\frac{a}{(3\pi)^{\frac{1}{3}}}K^{\frac{2}{3}}|\ln\epsilon|^{\frac{1}{4}}-\frac{2|\ln\epsilon|}{\pi}
(\frac{K}{s})^2}\\
&=M^2
e^{-2\left(\frac{a^3}{3\pi}\right)^{\frac{1}{3}}K^{\frac{2}{3}}|\ln\epsilon|^{\frac{1}{2}}\left(1-\frac{1}{2}
|\ln\epsilon|^{-\frac{1}{4}}\right)}.
\end{align*}
Noting $\frac{1}{2} |\ln\epsilon|^{-\frac{1}{4}}<\frac{1}{2}$ and $a\geq 3$, we
have $\left(\frac{a^3}{3\pi}\right)^{\frac{1}{3}}\geq
\left(\frac{3^3}{3\pi}\right)^{\frac{1}{3}}>1$ and 
\[
 |I(s)| \lesssim M^2
e^{-K^{\frac{2}{3}}|\ln\epsilon|^{\frac{1}{2}}}.
\]
Using the elementary inequality
\[
 e^{-x}\leq \frac{(6n-3)!}{x^{3(2n-1)}}, \quad x>0,
\]
we get
\[
 |I(s)|\lesssim\frac{M^2}{\left(\frac{K^2|\ln\epsilon|^{\frac{3}{2}}}{(6n-3)^3}
\right)^{2n-1}}.
\]
If $|\ln\epsilon|\leq 2^{\frac{1}{4}}(3\pi)^{\frac{1}{3}}K^{\frac{1}{3}}$, then $s=K$. We have
from \eqref{ep} and Lemma \ref{3.2} that
\[
 |I(s)|\leq \epsilon^2.
\]
Hence we obtain from Lemma \ref{3.4} that
\begin{align*}
 &\int_0^\infty \omega^2 \left(|u(-1, \omega)|^2+|u(1, \omega)|^2\right) {\rm
d}\omega\\
&\lesssim \epsilon^2+\frac{M^2}{\left(\frac{K^2|\ln\epsilon|^{\frac{3}{2}}}{
(6n-3)^3} \right)^{2n-1}}+\frac{
\|f\|^2_{H^n(-1,
1)}}{\left(2^{-\frac{1}{4}}(3\pi)^{-\frac{1}{3}}K^{\frac{2}{3}}|\ln\epsilon|^{\frac{1}{4}}\right)^{2n-1}}.
\end{align*}
By Lemma \ref{3.1}, we have
\[
 \|f\|^2_{L^2(-1, 1)}\lesssim \epsilon^2
+\frac{M^2}{\left(\frac{K^2|\ln\epsilon|^{\frac{3}{2}}}{(6n-3)^3}\right)^{2n-1}}+\frac{M^2}{\left(\frac{K^{\frac{2}
{3}}|\ln\epsilon|^{\frac{1}{4}}}{(6n-3)^3}\right)^{2n-1}}.
\]
Since $K^{\frac{2}{3}}|\ln\epsilon|^{\frac{1}{4}}\leq K^2
|\ln\epsilon|^{\frac{3}{2}}$ when $K>1$ and $|\ln\epsilon|>1$, we obtain
the stability estimate.
\end{proof}

\section{Conclusion}

In this paper, we show that the increasing stability can be obtained for
the inverse source scattering problem of the one-dimensional Helmholtz equation
in a two-layered medium by using multi-frequency Dirichlet data
at the two end points of an interval which contains the compact support of
the source. The stability estimate consists of the data discrepancy and the high
frequency tail of the source function. We believe that the proposed method can
be extended to handle a multi-layered medium. Another possible future work is
to investigate the higher dimensional problem.

\appendix

\section{Green's function in a two-layered medium}

Consider the equation
\begin{equation}\label{gfe}
\frac{{\rm d^2}g(x,y)}{{\rm d}x^2} + \kappa^2(x) g(x,y) = - \delta(x-y),
\end{equation}
where $\delta$ is the Dirac delta function and the wavenumber $\kappa$ is a
piecewise constant, i.e., 
\[
 \kappa(x) = \begin{cases}
  \kappa_1, & x>0,\\
  \kappa_2, & x<0.\\
 \end{cases}
\]

If $y>0$, the solution of \eqref{gfe} has the following form
\[
 g(x,y) = \begin{cases}
  Ae^{{\rm i}\kappa_1 x}, & x>y,\\
  Be^{{\rm i}\kappa_1 x} + Ce^{-{\rm i}\kappa_1 x} , & 0<x<y,\\
  De^{-{\rm i}\kappa_2 x}, & x<0,
 \end{cases}
\]
where $A, B, C, D$ are to be determined. Using the continuity conditions
\[
  \begin{cases}
  g(x,y)|_{x=y^{+}} = g(x,y)|_{x=y^{-}},\\
  \frac{{\rm d}g(x,y)}{{\rm d}x}|_{x=y^{+}} - \frac{{\rm d}g(x,y)}{{\rm
d}x}|_{x=y^{-}} = -1,\\
  g(x,y)|_{x=0^{+}} = g(x,y)|_{x=0^{-}},\\
  \frac{{\rm d}g(x,y)}{{\rm d}x}|_{x=0^{+}} = \frac{{\rm d}g(x,y)}{{\rm
d}x}|_{x=0^{-}},
 \end{cases}
\]
we get a linear system:
\[
  \begin{cases}
  Ae^{{\rm i}\kappa_1 y} = Be^{{\rm i}\kappa_1 y} + Ce^{-{\rm i}\kappa_1 y},\\
  {\rm i}\kappa_1 A e^{{\rm i}\kappa_1 y} - {\rm i}\kappa_1 B e^{{\rm i}\kappa_1
y} + {\rm i}\kappa_1 C e^{-{\rm i}\kappa_1 y} = -1, \\
  B + C = D,\\
  {\rm i}\kappa_1 B - {\rm i}\kappa_1 C = -{\rm i}\kappa_2 D.
 \end{cases}
\]
A simple calculation yields that 
\[
  \begin{cases}
  A = {\rm i} \frac{\kappa_1 - \kappa_2}{2\kappa_1(\kappa_1 + \kappa_2)}e^{{\rm
i}\kappa_1 y} + \frac{\rm i}{2\kappa_1}e^{-{\rm i}\kappa_1 y},\\
  B = {\rm i} \frac{\kappa_1 - \kappa_2}{2\kappa_1(\kappa_1 + \kappa_2)}e^{{\rm
i}\kappa_1 y},\\
  C = \frac{\rm i}{2\kappa_1}e^{{\rm i}\kappa_1 y},\\
  D = \frac{\rm i}{\kappa_1 + \kappa_2}e^{{\rm i}\kappa_1 y},
  \end{cases}
\]
which gives
\[
 g(x, y)=\begin{cases}
           {\rm i} \frac{\kappa_1 - \kappa_2}{2\kappa_1(\kappa_1 +
\kappa_2)}e^{{\rm i}\kappa_1 (x+y)} + \frac{\rm i}{2\kappa_1}e^{{\rm i}\kappa_1
|x-y|}, & x>0,\\
 \frac{\rm i}{\kappa_1 + \kappa_2}e^{{\rm i}(\kappa_1 y-\kappa_2 x)}, & x<0.
         \end{cases}
\]

If $y<0$, the solution has the following form
\[
 g(x,y) = \begin{cases}
  Ae^{-{\rm i}\kappa_2 x}, & x<y,\\
  Be^{-{\rm i}\kappa_2 x} + Ce^{{\rm i}\kappa_2 x} , & y<x<0,\\
  De^{{\rm i}\kappa_1 x}, & x>0.
 \end{cases}
\]
Using the continuity conditions
\[
  \begin{cases}
  g(x,y)|_{x=y^{+}} = g(x,y)|_{x=y^{-}},\\
  \frac{{\rm d}g(x,y)}{{\rm d}x}|_{x=y^{+}} - \frac{{\rm d}g(x,y)}{{\rm
d}x}|_{x=y^{-}} = -1,\\
  g(x,y)|_{x=0^{+}} = g(x,y)|_{x=0^{-}},\\
  \frac{{\rm d}g(x,y)}{{\rm d}x}|_{x=0^{+}} = \frac{{\rm d}g(x,y)}{{\rm
d}x}|_{x=0^{-}},
 \end{cases}
\]
we obtain 
\[
  \begin{cases}
  Ae^{-{\rm i}\kappa_2 y} = Be^{-{\rm i}\kappa_2 y} + Ce^{{\rm i}\kappa_2 y},\\
  -{\rm i}\kappa_2 A e^{-{\rm i}\kappa_2 y} + {\rm i}\kappa_2 B e^{-{\rm
i}\kappa_2 y} - {\rm i}\kappa_2 C e^{{\rm i}\kappa_2 y} = 1, \\
  B + C = D,\\
  -{\rm i}\kappa_2 B + {\rm i}\kappa_2 C = {\rm i}\kappa_1 D.
 \end{cases}
\]
It follows from solving the above linear system that 
\[
  \begin{cases}
  A = {\rm i} \frac{\kappa_2 - \kappa_1}{2\kappa_2(\kappa_1 + \kappa_2)}e^{-{\rm
i}\kappa_2 y} + \frac{\rm i}{2\kappa_2}e^{{\rm i}\kappa_2 y},\\
  B = {\rm i} \frac{\kappa_2 - \kappa_1}{2\kappa_2(\kappa_1 + \kappa_2)}e^{-{\rm
i}\kappa_2 y},\\
  C = \frac{\rm i}{2\kappa_2}e^{-{\rm i}\kappa_2 y},\\
  D = \frac{\rm i}{\kappa_1 + \kappa_2}e^{-{\rm i}\kappa_2 y},
  \end{cases}
\]
which yields 
\[
 g(x,y)=\begin{cases}
  {\rm i} \frac{\kappa_2 - \kappa_1}{2\kappa_2(\kappa_1 + \kappa_2)}e^{-{\rm
i}\kappa_2 (x+y)} + \frac{\rm i}{2\kappa_2}e^{{\rm i}\kappa_2 |x-y|}, &
x<0,\\[10pt]
 \frac{\rm i}{\kappa_1 + \kappa_2}e^{{\rm i}(- \kappa_2 y + \kappa_1 x )}, &
x>0.
 \end{cases}
\]

\end{document}